\documentclass{article}
\usepackage{authblk}
\usepackage{amsmath}
\usepackage{amssymb}
\newtheorem{theorem}{Theorem}

\newenvironment{proof}{\noindent{\bf Proof:} \hspace*{1mm}}{
	\hspace*{\fill} $\Box$ \bigskip}

\newtheorem{mydef}{Definition}

\newtheorem{cor}{Corollary}

\begin{document}

\title{On a conjecture of Erd\H{o}s and Szekeres}         
\author{by Georgios Vlachos \\
		gvlachos@mit.edu}        
\date{}          
\maketitle
 

\begin{abstract}
Let f(n) denote the smallest positive integer such that every set of $f(n)$ points in general position in the Euclidean plane contains a convex n-gon. In a seminal paper published in 1935, Erd\H{o}s and Szekeres proved that f(n) exists and provided an upper bound. In 1961, they also proved a lower bound, which they conjectured is optimal. Their bounds are: $2^{n-2}+1 \leq f(n) \leq {2n - 4 \choose n-2}+1$. Since then, the upper bound has been improved by rougly a factor of 2, to $f(n) \leq {2n - 5 \choose n-2}+1$. In the current paper, we further improve the upper bound by proving that:
$$ \limsup\limits_{n\rightarrow \infty} \frac{f(n)}{{2n-5 \choose n-2}} \leq \frac{29}{32}$$ 
\end{abstract}

\section{Introduction}

In 1935, Paul Erd\H{o}s and George Szekeres published a paper titled {\em 'A combinatorial problem in geometry'} [2], in which they published classical results in Ramsey Theory. One of these results was the proof of the existence of a minimum positive integer $f(n)$ such that, for any set of $f(n)$ points in general position in the plane, there exists a subset of $n$ poitns that forms a convex $n$-gon. They presented several proofs of the existence of $f(n)$ and gave an upper bound. In [3], published in 1961, they also provided a lower bound by construction, which has been conjectured to be the exact value of $f(n)$. These bounds were:
$$2^{n-2} + 1 \leq f(n) \leq {2n -4 \choose n-2}+1$$
The upper bound was not improved for 60 years, until in 1998, Fan Chung and Ronald Graham proved that $f(n) \leq {2n-4 \choose n-2}$ [1]. Two improvements soon followed in the same year. In [4] Daniel Kleitman and Lior Pachter proved that $f(n) \leq {2n-4 \choose n-2} -2n+7$, while G\'{e}za T\'{o}th and Pavel Valtr [5] improved it to $f(n) \leq {2n-5 \choose n-2} + 2$. The best known upper bound, published in 2005 again by T\'{o}th and Valtr [6], is $f(n) \leq {2n-5 \choose n-2} +1$. \\
In the current paper, we further improve the best currently known upper bound. We describe the upper bound bound in two equivalent ways, one of which, given in the Appendix, is in the closed-form function: $$f(n) \leq {2n-5 \choose n-2} - {2n-8 \choose n-5} + {2n-10 \choose n-7}+2$$
In our main text, we describe a function that is not in closed-form, but provides a more intuitive understanding of the algebraic manipulations. Using any of these two results, we can directly prove that: $$ \limsup\limits_{n\rightarrow \infty} \frac{f(n)}{{2n-5 \choose n-2}} \leq \frac{29}{32}$$

\section{Convex Sequences}

In this section we reprove the original upper bound from [2], by slightly modifuing their argument. First, we will need some definitions, which to our knowledge were first introduced in [2]. \\

\begin{mydef} \textbf{:} \\
\textbf{n-cup:} We define a set of $n$ points in the plane to be an $n$-cup iff, when the points are listed as $(x_1,y_1),(x_2,y_2),..,(x_n,y_n)$ in increasing order of $x$ coordinates, the following holds: 
$$ \frac{y_2-y_1}{x_2-x_1} < \frac{y_3-y_2}{x_3-x_2} < .. < \frac{y_n-y_{n-1}}{x_n -x_{n-1}} $$
\textbf{n-cap:} We define a set of $n$ points in the plane to be an $n$-cup iff, when the points are listed as $(x_1,y_1),(x_2,y_2),..,(x_n,y_n)$ in increasing order of $x$ coordinates, the following holds: 
$$ \frac{y_2-y_1}{x_2-x_1} > \frac{y_3-y_2}{x_3-x_2} > .. > \frac{y_n-y_{n-1}}{x_n -x_{n-1}} $$
\end{mydef}

Clearly, $n$-cups and $n$-caps are convex $n$-gons, so the existence of an $n$-cup or an $n$-cap in a set of points implies the existence of a convex $n$-gon.  

\begin{theorem}[Erd\H{o}s and Szekeres] \textbf{:} 
Let $f(n,m)$ denote the largest positive integer, with the property that there exists a set of $f(n,m)$ points in general position in the plane that contains no n-cup and no m-cap. Then $f(n,m) = $${m+n-4}\choose{n-2}$.
\end{theorem}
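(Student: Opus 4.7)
The plan is to prove the equality $f(n,m) = \binom{m+n-4}{n-2}$ by double induction on $n+m$, handling both directions (upper and lower bound on $f(n,m)$) simultaneously. The base cases are $n=2$ or $m=2$: since any two points in general position form both a $2$-cup and a $2$-cap, a configuration avoiding $2$-cups or $2$-caps has at most one point, matching $\binom{m-2}{0} = \binom{n-2}{n-2} = 1$. For the inductive step, I will assume the theorem holds for the pairs $(n-1, m)$ and $(n, m-1)$ and use the Pascal identity $\binom{m+n-5}{n-3} + \binom{m+n-5}{n-2} = \binom{m+n-4}{n-2}$ to bridge them.

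For the upper bound, I would argue by contradiction: suppose we have $N = \binom{m+n-4}{n-2} + 1$ points in general position with no $n$-cup and no $m$-cap. Classify each point $p$ by whether it is the rightmost point of some $(n-1)$-cup and/or the leftmost point of some $(m-1)$-cap. The key lemma to establish is that no point can simultaneously be both. Given an $(n-1)$-cup ending at $p$, say $p_1, \dots, p_{n-2}, p$, and an $(m-1)$-cap starting at $p$, say $p, q_1, \dots, q_{m-2}$, compare the slope of $p_{n-2}p$ with that of $pq_1$; in one case we append $q_1$ to the cup to obtain an $n$-cup, in the other case we prepend $p_{n-2}$ to the cap to obtain an $m$-cap. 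So the set of points avoiding being a rightmost cup endpoint has no $(n-1)$-cup and (inheriting from the whole configuration) no $m$-cap, hence has at most $f(n-1,m) = \binom{m+n-5}{n-3}$ points; symmetrically for the other set. Summing gives $N \leq \binom{m+n-5}{n-3} + \binom{m+n-5}{n-2} = \binom{m+n-4}{n-2}$, a contradiction.

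For the lower bound, I would construct a set $A$ of $f(n-1,m) = \binom{m+n-5}{n-3}$ points without an $(n-1)$-cup or $m$-cap, and a set $B$ of $f(n,m-1) = \binom{m+n-5}{n-2}$ points without an $n$-cup or $(m-1)$-cap, then place $B$ entirely to the right of $A$ and arrange the vertical positions so that every slope between a point of $A$ and a point of $B$ is steeper than any slope inside $A$ and shallower than any slope inside $B$ (or the opposite convention, chosen to make the gluing work). Any $n$-cup in the union would have to include at least one transition edge from $A$ to $B$; analyzing cases on the number of points in each side forces either an $(n-1)$-cup in $A$ or an $n$-cup in $B$, both impossible. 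The same analysis rules out $m$-caps.

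The main obstacle is the lower bound construction: the upper bound step is a clean two-line slope argument once one states the right classification of points, but getting the lower bound to exactly match $\binom{m+n-4}{n-2}$ requires choosing the relative scale and slopes of $A$ and $B$ precisely so that the combinatorics of cups/caps across the boundary behaves as if the boundary edge is free to be either a cup edge (steeper than everything on the left) or a cap edge (shallower than everything on the right). Once that geometric setup is fixed, the counting is a direct application of the inductive hypothesis.
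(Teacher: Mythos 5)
Your upper-bound argument is correct but takes a genuinely different route from the paper. The paper partitions the whole set $S$ into an ``upper'' set $A$ and a ``lower'' set $B$ by a geometric rule guaranteeing that any cap in $A$ extends rightward into $B$ and any cup in $B$ extends leftward into $A$, and then applies the pigeonhole principle to $|A|+|B|$. You instead use the classical classification: no point can simultaneously be the rightmost vertex of an $(n-1)$-cup and the leftmost vertex of an $(m-1)$-cap (the two-slope comparison at that point), so $S$ is covered by the set of points that are never rightmost vertices of $(n-1)$-cups (which therefore contains no $(n-1)$-cup and inherits having no $m$-cap) together with the analogous set for caps. Both arguments give $f(n,m)\le f(n-1,m)+f(n,m-1)$ and then the binomial bound via Pascal's identity; yours is arguably the cleaner self-contained proof, while the paper's upper/lower partition is the one that gets reused verbatim in Theorems 2--4, which is presumably why the author sets it up here.

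The lower bound is where your proposal has a genuine gap --- though note the paper explicitly omits this direction as unnecessary for its main result, so you are attempting strictly more than the paper proves. The gluing condition you state, that every slope from $A$ to $B$ is steeper than every slope inside $A$ and shallower than every slope inside $B$, does not work: under that condition both junction inequalities for a cup are satisfied automatically, so a maximal cup in $A$ (up to $n-2$ points) concatenated with a maximal cup in $B$ (up to $n-1$ points) is itself a cup of the union, of length up to $2n-3$. The ``opposite convention'' fails symmetrically by producing caps of length up to $2m-3$. The correct condition, with $A$ (no $(n-1)$-cup, no $m$-cap) on the left and $B$ (no $n$-cup, no $(m-1)$-cap) on the right, is that every crossing slope exceeds every slope inside $A$ \emph{and} every slope inside $B$ (achieved, e.g., by placing a horizontally flattened copy of $B$ far above and to the right of $A$). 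Then a cup meeting both sets contains at most one point of $B$ (a crossing edge cannot be followed by a shallower $B$-edge inside a cup), giving at most $(n-2)+1=n-1$ vertices, and a cap meeting both sets contains at most one point of $A$, giving at most $1+(m-2)=m-1$ vertices; this yields $f(n,m)\ge f(n-1,m)+f(n,m-1)$ as required.
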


\begin{proof}
When $n=3$ or $m=3$, the formula is trivially correct. If we can show that $f(n+1,m+1)=f(n+1,m)+f(n,m+1)$, then simple algebra gives the above formula. So it all boils down to proving this recurrence relation. \\
We will first prove that $f(n+1,m+1) \leq f(n+1,m)+f(n,m+1)$. Let S be a set of more than $f(n+1,m)+f(n,m+1)$ points. We will show that it contains an (n+1)-cup or an (m+1)-cap. \\
We partition S into two sets, the "upper" set A and the "lower" set B, according to the following rule. We order the points in S by x coordinate. For any $s=(s_x,s_y)\in S$ and every $s' \in S$ with $s'_x < s_x$, consider the angle defined by the points $s',s,(s_x,s_y+1)$ in order. For $s" \in S$ with $s"_x > s_x$, consider the angle defined by the points $s",s,(s_x,s_y-1)$ in order. If the argument that forms the minimal among the described angles lies to the left of $s$, then $s \in B$, the "lower" set. Otherwise, $s \in A$, the "upper" set. The reader should check that this partition is consistent. \\
Notice that any $n$-cap with nodes exclusively in $A$ can be extended to an (n+1)-cap with rightmost endpoint in $B$. Similarly, any n-cup with nodes exclusively in $B$ can be extended to an (n+1)-cup with leftmost endpoint in $A$. \\
By the pigeonhole principle, either A contains more than $f(n+1,m)$ points, or B contains more than $f(n,m+1)$ points. If $|A|>f(n+1,m)$, then either we have an $(n+1)$-cup, in which case we are done, or we have an $m$-cap with nodes in A, which can be extended to an $(m+1)$-cap with right endpoint in B, in which case we are also done. The other case is treated similarly. So $f(n+1,m+1) \leq f(n+1,m)+f(n,m+1)$. 
We omit the proof that $f(n+1,m+1) \geq f(n+1,m)+f(n,m+1)$, since we will not need it to prove the upper bound. 
\end{proof}

\begin{cor} \textbf{:}
Theorem 1 implies $f(n) \leq {2n-4 \choose n-2}+1$.
\end{cor}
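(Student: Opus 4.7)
The plan is to deduce the corollary directly from Theorem~1 by specializing to the case $m = n$ and using the observation made right after Definition~1 that any $n$-cup or $n$-cap is itself a convex $n$-gon.

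Concretely, I would argue as follows. Set $m = n$ in Theorem~1 to obtain $f(n,n) = \binom{2n-4}{n-2}$. By the definition of $f(n,m)$, this means that any set of $\binom{2n-4}{n-2} + 1$ points in general position in the plane must contain either an $n$-cup or an $n$-cap (otherwise we would have a point set of size exceeding $f(n,n)$ with no $n$-cup and no $n$-cap, contradicting the definition of $f(n,n)$ as the largest such size). Since both $n$-cups and $n$-caps are convex $n$-gons, we conclude that every such set contains a convex $n$-gon, which by the definition of $f(n)$ gives $f(n) \leq \binom{2n-4}{n-2} + 1$.

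There is essentially no obstacle here: the corollary is a one-line specialization of Theorem~1, once one invokes the remark that $n$-cups/caps are convex $n$-gons. The only care needed is to correctly translate between the definition of $f(n,m)$ (largest set avoiding the configurations) and $f(n)$ (smallest set forcing a convex $n$-gon), which accounts for the ``$+1$'' in the bound.
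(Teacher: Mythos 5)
Your argument is correct and is exactly the intended one: the paper states this corollary without proof, relying on the specialization $m=n$ together with the remark after Definition~1 that cups and caps are convex polygons, which is precisely what you do. The only (harmless) nuance is that you cite the equality $f(n,n)=\binom{2n-4}{n-2}$, whereas only the inequality $f(n,n)\leq\binom{2n-4}{n-2}$ is needed and is the direction the paper actually proves.
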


\section{Good Points}

\begin{mydef} \textbf{:}
Given a set of points S and a point $s \in S$, we call the point $(m,l)-$good if, for arbitrary $n \geq 4$, the following holds: By adjoining to S any set B with $|B|\leq n-2$, such that $B\cup S$ contains an $(n-1)$-cup whose left endpoint is s and whose right endpoint is in B, one of the following holds: \\
We either get an l-cap whose 2 rightmost points belong to S,\\
or an m-cup whose 2 leftmost points belong to S,\\
or a convex n-gon . \\
\end{mydef}

\begin{mydef} \textbf{:} 
We call a set of points $(n,m)$-free if it contains no $n$-cup and no $m$-cap.
\end{mydef}

\begin{theorem} \textbf{:}
For $m \geq 4$, any $(m,4)$-free set S of more than ${m \choose 2} - m + 2$ points contains an $(m,4)$-good point s. This point s has the additional property that it is the left endpoint of a $3-$cap.
\end{theorem}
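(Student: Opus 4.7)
The plan is to take $s$ to be the leftmost point of $S$ (by $x$-coordinate) and to verify that $s$ satisfies both conclusions of the theorem, using Theorem 1 to supply the combinatorial content of the size hypothesis.

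First, I would unpack the size bound via Theorem 1. Since $f(m-1,4) = \binom{m-1}{2} = \binom{m}{2} - m + 1$, the hypothesis $|S| > \binom{m}{2} - m + 2$ gives $|S \setminus \{p\}| > f(m-1,4)$ for every $p \in S$. Because $S$ is $(m,4)$-free it contains no $4$-cap, so Theorem 1 forces $S \setminus \{p\}$ to contain an $(m-1)$-cup for \emph{every} choice of $p \in S$. In particular, $S$ itself contains an $(m-1)$-cup.

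Second, I would verify that the leftmost point $s$ is the left endpoint of a $3$-cap. Suppose for contradiction that it is not; since $s$ is leftmost, every triple $(s,q,q')$ with $q, q' \in S \setminus \{s\}$ and $q_x < q'_x$ must then be a cup. Take an $(m-1)$-cup $u_1,\ldots,u_{m-1}$ in $S \setminus \{s\}$ supplied by the first step. The cup assumption applied to $(s, u_1, u_2)$ gives slope$(s,u_1) < $ slope$(u_1,u_2)$, which together with the slope inequalities of $u_1,\ldots,u_{m-1}$ promotes $(s, u_1, \ldots, u_{m-1})$ to an $m$-cup in $S$, contradicting $(m,4)$-freeness. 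Fix a $3$-cap $(s, a, b)$ with $a, b \in S$ obtained in this way.

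Third, the crux: I would show that $s$ is $(m,4)$-good. Assume for contradiction the existence of $n \geq 4$, a set $B$ with $|B| \leq n-2$, and an $(n-1)$-cup $\mathcal{C} = (p_0, p_1, \ldots, p_{n-2})$ in $S \cup B$ with $p_0 = s$ and $p_{n-2} \in B$, such that $S \cup B$ contains no convex $n$-gon, no $4$-cap whose two rightmost points lie in $S$, and no $m$-cup whose two leftmost points lie in $S$. The strategy is to combine the $3$-cap $(s,a,b)$ with $\mathcal{C}$ (and, when needed, with an $(m-1)$-cup inside $S \setminus \{p\}$ for a suitable $p$) to manufacture one of the three forbidden configurations. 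The argument splits according to the $x$-positions of $a, b$ relative to the $p_i$ and to whether each $p_i$ belongs to $S$ or to $B$. In a representative case, if $a$ lies above the edge $sp_1$ then $(s, a, p_1)$ is a $3$-cap whose rightmost two points lie in $S$ whenever $p_1 \in S$, and extending it rightward by a single further point of $S$ (which exists abundantly since $|S|$ is large) yields a $4$-cap with two rightmost points in $S$; in the dual case, $a$ lies below $sp_1$, so that $(s, a, p_1)$ is a $3$-cup, and one can try to splice $a$ into $\mathcal{C}$ to lengthen the cup — either producing a convex $n$-gon outright or, by concatenating with a cup in $S\setminus\{s\}$ supplied by the first step, an $m$-cup whose two leftmost points lie in $S$.

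The main obstacle is precisely the case analysis in the third step: one must track with care which $p_i$ belong to $S$ versus $B$, handle the symmetric subcase involving $b$ as well as $a$, and exploit either the $3$-cap at $s$ or the ubiquity of $(m-1)$-cups in the residual sets $S \setminus \{p\}$ to close out every subcase. The first two steps are direct applications of Theorem 1 and its slope formalism.
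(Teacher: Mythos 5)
Your first two steps are sound: the count $\binom{m}{2}-m+2=\binom{m-1}{2}+1=f(m-1,4)+1$ does force an $(m-1)$-cup in $S\setminus\{p\}$ for every $p\in S$, and the contradiction argument showing that the leftmost point is the left endpoint of a $3$-cap is correct. But the entire content of the theorem is your third step --- that $s$ is $(m,4)$-good --- and there you have a plan rather than a proof, and the plan as sketched does not close. Two concrete problems. First, ``extending it rightward by a single further point of $S$ (which exists abundantly)'' is not valid: to extend the $3$-cap $(s,a,p_1)$ to a $4$-cap you need a point of $S$ lying to the right of $p_1$ and \emph{below the extension of the edge $ap_1$}; mere abundance of points supplies no such point, and indeed in an $(m,4)$-free set no triple extends to a $4$-cap at all, so what you must actually produce is a $4$-cap whose two \emph{rightmost} points are in $S$ but whose left points come from $B$ --- a configuration your sketch never constructs. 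Likewise ``concatenating with a cup in $S\setminus\{s\}$'' requires a slope compatibility at the junction that you have not arranged.

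Second, and more structurally, choosing $s$ to be the leftmost point of $S$ discards exactly the leverage the argument needs. The paper's proof takes $s=v_2$, the second-from-right vertex of an $(m-1)$-cup chosen (by a separate induction on $m$ using the upper/lower splitting) to satisfy an extra property $R$: there is a witness $r\in S$ lying to the right of the rightmost cup vertex $v_1$ and below the extension of the line $sv_1$. In the four-way case analysis on the position of the rightmost adjoined point relative to $v_1$ and the line $sv_1$, the witness $r$ is what yields the $4$-cap with two rightmost points ($v_1$ and $r$) in $S$, and the $m-3$ cup vertices lying to the \emph{left} of $s$ are what yield the $m$-cup with two leftmost points in $S$. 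With $s$ leftmost there are no points of $S$ to the left of $s$, so the $m$-cup outcome can only be realized through the adjoined cup itself, whose interior vertices may lie in $B$; several branches of your case analysis therefore have no way to terminate in any of the three allowed configurations. To repair the argument you would need to relocate $s$ into a carefully chosen $(m-1)$-cup and prove the existence of that cup with its witness point, which is where the real work of the theorem lies.
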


\begin{proof}
Since the set S contains more than ${m \choose 2} -m +2 = {m-1 \choose 2} + 1 $ points, it contains a 4-cap or an (m-1)-cup. In the first case we arrive at a contradiction, since the set is $(m,4)$-free. In the second case, we will start by finding an (m-1)-cup with a specific desirable property. Namely, if p and q are the two rightmost points of the (m-1)-cup (with q being the rightmost one), then we wish that there exists some $r \in S$ that lies to the right of both p and q, such that r lies below the extension of the line pq. Let us denote by R this property of an $(m-1)$-cup. \\ 
We will prove the existence of an $(m-1)$-cup with property R, using induction on m. We start with the base case $m=4$. Given a set $S'$ with 5 points, we divide them into the the "upper" set A and the "lower" set B. If $|A| \geq 3$, then if A contains a 3-cap, we can extend it to a 4-cap using set B, which is a contradiction. Otherwise, it contains a 3-cup. Since the right endpoint of this 3-cup lies in A, this 3-cup must satisfy property R. If $|A| \leq 2$, then $|B| \geq 3$. If B contains a 3-cup, then it can be extended to a 4-cup using A, which is a contradiction. If it contains a 3-cap $v_1,v_2,v_3$, where the nodes are numbered according to the increasing order of their x coordinates, then $\exists a \in A$ such that the 3-cup $a,v_1,v_2$ satisfies property R. So we have proven the base case. \\
Now back to our original set S. Suppose that S does not contain a 4-cap. We will show the existence of an $(m-1)$-cup with the property R. First, we split S into two sets, the "upper" set A and the "lower" set B, as we did before before. If $|A| \geq m - 1$, then it must form an $(m-1)$-cup, otherwise we get a $3-$cap which can be extended to a $4-$cap with right endpoint in B and we are done. But then this $(m-1)$-cup clearly satisfies the desired property R. So in this case we are done. If $|A| \leq m-2 $, then $|B| \geq {m-1 \choose 2} + 2 - (m-2) = {m-2 \choose 2} + 2 $. So by the induction hypothesis, set B contains an $(m-2)$-cup with property R. This can be extended to an $(m-1)$-cup with left endpoint in A, so that property R is preserved. \\
We have proven the existence of an $(m-1)$-cup with property R and are ready to prove the existence of an $(m,4)$-good point s. Let the the (m-1)-cup with property R be denoted by $v_1,v_2,..,v_{m-1}$. The vertices are numbered from right to left, sorted by x coordinate in decreasing order. Set $s=v_2$. Now consider adding an (n-1)-cap $u_1,u_2,..,u_{n-2},u_{n-1}=s$, where again we order the nodes from right to left, such that $u_1$ does not belong to S. In particular, we want $v_1 \neq u_1$. Before we proceed, note again that we picked the (m-1)-cup such that there is a node r that lies to the right of $v_1$ and below the extension of the line $sv_1$. \\
Now we have 4 possible locations for the position of $u_1$ relative to $v_1$: \\
-To the left of $v_1$ and above the line $sv_1$. In this case, one of the following holds:\\
    $u_1,u_2,s=v_2,v_3,..,v_{m-1}$ form an m-cup. \\
    $s,u_{n-2},..,u_2,v_1,u_1$ form a convex n-gon. \\
    $u_3,u_2,v_1,r$ form a 4-cap. \\
-To the left of $v_1$ and below the line $sv_1$. In this case, one of the following holds:\\
    $s,u_{n-2},..,u_2,u_1,v_1$ form an n-cup. \\
    $u_2,u_1,v_1,r$ form a 4-cap. \\
-To the right of $v_1$ and above the extension of the line $sv_1$. In this case: \\
    $v_{m-1},v_{m-2},..,s=v_2,v_1,u_1$ form an m-cup. \\
-To the right of $v_1$ and below the extension of the line $sv_1$. In this case: \\
    $s,v_{n-1},v_{n-2},..,v_2,v_1,u_1$ form a convex n-gon. \\
So s is an $(m,4)$-good point, as it always forces one of the configurations described in Definition 1.
\end{proof}

\begin{mydef} \textbf{:}
Define the family of functions $w_i(m,k)$ for $i\geq 4, m,k \geq 4$ by: \\
$w_4(m,k) = {m+k-6 \choose k-2} + {m+k-7 \choose k-3} + 2 {m+k-8 \choose k-4}$ \\
And for $i \geq 5$: \\
$w_i(m,k) = \begin{cases}
			0 &\text{if } k < i \\
			(i-1) {m+k-i-4 \choose k-i} & \text{if } k \geq i 
			\end{cases} $
\end{mydef}

\begin{theorem} \textbf{:}
For $m \geq 4,l \geq 4$, any $(m,l)$-free set S of more than $g(m,l) = \sum_{i=4}^{\infty} w_i(m,l)$ points contains an $(m,l)$-good point s. This point s also  has the additional property that it is the left endpoint of an $(l-1)-$cap, which we call property $R$.
\end{theorem}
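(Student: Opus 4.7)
The natural approach is to induct on $l$, with Theorem 2 serving as the base case $l = 4$. Since $w_i(m,4) = 0$ for all $i \geq 5$, we have $g(m,4) = w_4(m,4) = \binom{m-2}{2} + (m-3) + 2 = \binom{m}{2} - m + 2$, which is exactly the threshold in Theorem 2; the $3$-cap at $s$ guaranteed there is the $(l-1)$-cap of property $R$ in the special case $l-1 = 3$.

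For the inductive step I would first prove a strengthened existence statement: inside any $(m,l)$-free set $S$ with $|S| > g(m,l)$ there exists an $(m-1)$-cup $v_1, v_2, \ldots, v_{m-1}$, indexed right-to-left by $x$-coordinate, together with $l-3$ extra points $r_1, \ldots, r_{l-3}$ lying to the right of $v_1$, such that $v_2, v_1, r_1, \ldots, r_{l-3}$ forms an $(l-1)$-cap. This is the natural generalization of property $R$ in the proof of Theorem 2, where only the single point $r_1$ was required. I would establish it by a nested induction on $m$, partitioning $S$ into the ``upper'' set $A$ and ``lower'' set $D$ as in the proof of Theorem 1 and repeatedly invoking the extension principle (a cap in $A$ extends to a cap one longer with right endpoint in $D$; a cup in $D$ extends to a cup one longer with left endpoint in $A$). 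When $|A|$ is large, either $A$ already contains the $(m-1)$-cup and the points $r_j$ are furnished by $D$, or a shorter cup/cap configuration in $A$ combined with points of $D$ assembles the structure; when $|D|$ is large, the induction hypothesis applied to $D$ furnishes an $(m-2)$-cup with the analogous enriched property, extended leftwards by a point of $A$ without disturbing the $(l-1)$-cap.

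Once this enriched structure is in hand, set $s = v_2$. Property $R$ of the present theorem is immediate, since $s$ is the left endpoint of the $(l-1)$-cap $v_2, v_1, r_1, \ldots, r_{l-3}$. To verify that $s$ is $(m,l)$-good, adjoin any set $B$ producing an $(n-1)$-cup with right endpoint in $B$ and left endpoint $s$, and perform the four-case analysis of Theorem 2 on the position of $u_1$ relative to $v_1$ and to the extension of the line $s v_1$. The two cases that produced an $m$-cup or a convex $n$-gon in Theorem 2 transfer verbatim. The two cases that produced a $4$-cap now produce an $l$-cap by appending $r_1, \ldots, r_{l-3}$ after $v_1$, yielding configurations of the form $u_j, u_{j-1}, v_1, r_1, \ldots, r_{l-3}$ whose two rightmost vertices $r_{l-4}, r_{l-3}$ lie in $S$, as required.

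The principal obstacle is the arithmetic underlying the nested induction in step two: verifying that after the $A/D$ split the residue $|D|$ exactly meets the threshold $g(m-1,l)$ (or a suitably shifted variant) required to trigger the hypothesis, across every sub-case. This is precisely what the apparently ad hoc weight function $w_4$, with its three binomial terms, together with the $w_i$ for $i \geq 5$, are engineered to enforce, so that an identity of the form $g(m,l) = (\text{contribution from } A) + g(m-1,l)$ holds uniformly. The subsequent geometric case analysis, though longer than in Theorem 2, introduces no essentially new ideas beyond systematically replacing the single point $r$ with the cap $r_1, \ldots, r_{l-3}$.
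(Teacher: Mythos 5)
Your route is genuinely different from the paper's, and it contains a real gap precisely at the step you flag as ``the principal obstacle.'' The paper does no new geometry beyond Theorem 2: it introduces $h(m,l)$, the maximum size of an $(m,l)$-free set with no $(m,l)$-good point, observes that the upper/lower splitting yields the recurrence $h(m+1,l+1)\le h(m+1,l)+h(m,l+1)$, checks that $g$ satisfies the same Pascal-type recurrence with equality, and then verifies the two boundary rows $h(m,4)\le g(m,4)$ (which is Theorem 2, as you note) and $h(4,l)\le g(4,l)$ (which uses the trivial bound $h(3,l)\le f(3,l)=l-1=w_l(4,l)$, not any good-point construction); strong induction then closes the argument. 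Your plan instead re-proves a geometric existence statement for every $l$, and that is exactly the part you do not supply.

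The sketch you give of that existence statement does not work as stated. First, you propose a nested induction on $m$ for fixed $l$, but $g$ satisfies the two-variable recurrence $g(m,l)=g(m-1,l)+g(m,l-1)$, and the upper/lower split degrades the two parameters in opposite directions ($A$ is only guaranteed $(m,l-1)$-free, $D$ only $(m-1,l)$-free), so a single-variable induction cannot meet the threshold on both sides. Second, the dichotomy driving the induction in Theorem 2 --- if $|A|\ge m-1$ then $A$ must form an $(m-1)$-cup, since any $3$-cap in $A$ extends to a $4$-cap, a contradiction --- is special to $l=4$; for $l\ge 5$ a $4$-cap is harmless, so the mechanism by which you extract the $(m-1)$-cup, and a fortiori the cap tail $r_1,\dots,r_{l-3}$ (which may need points from both $A$ and $D$), has no replacement in your sketch. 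Third, the degenerate rows $m\in\{3,4\}$, where the paper's bound comes from $f(3,l)$ rather than from any cup-plus-cap configuration, are not addressed. The parts that do transfer are the base case $g(m,4)=\binom{m}{2}-m+2$ (correct) and the final four-case analysis (the only property of $r$ used in Theorem 2 is that $s,v_1,r$ is a $3$-cap, so appending $r_1,\dots,r_{l-3}$ does upgrade the $4$-caps to $l$-caps whose two rightmost vertices lie in $S$); but without a proof of the enriched existence statement at the stated threshold $\sum_i w_i(m,l)$, the quantitative content of the theorem is unestablished.
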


\begin{proof}
Let $h(m,l)$ denote the maximum cardinality of an $(m,l)$-free set that contains no $(m,l)$-good points. \\
Also, $h(m,l)\leq f(m,l)$, since we are concerned with $(m,l)$-free sets. \\
For $l=4$, we showed in Theorem 2 that $h(m,4)\leq {m \choose 2} -m+2$. \\
In light of the identity ${a \choose b} = {a-1 \choose b} + {a-1 \choose b-1}$, we notice that $g(m+1, l+1) = g(m+1,l) + g(m,l+1)$ for $m,l \geq 4$. \\
Moreover, we clearly have $h(m+1,n+1) \leq h(m+1,n) + h(m,n+1)$. \\ 
So if we prove that for $l \geq 4$ we have $h(4,l) \leq g(4,l)$ and that for $m\geq 4$ we have $h(m,4) \leq g(m,4)$, we are done by the above inequalities and strong induction. \\
First, we prove that $h(m,4) \leq g(m,4)$. This follows from \\
$h(m,4) \leq {m \choose 2}-m+2 = \frac{m(m-1)}{2}-m+2 = \frac{(m-2)(m-3)+4m-6}{2}-m+2 = $ \\
${m-2 \choose 2} + (m-3) + 2 = w_4(m,4)$. \\
Next, we prove that $h(4,l) \leq g(4,l)$ by induction on $l$. The base case $l=4$ was just proven. For $l \geq 5$, we have by the inductive hypothesis \\
$h(4,l) \leq h(4,l-1) + h(3,l) \leq g(4,l-1) + f(3,l) = g(4,l-1) + w_l(4,l) = g(4,l)$. The last inequality follows from the fact that $w_i(4,l-1)=w_i(4,l)$ for $l \geq i+1$. \\
$h(m,l) \leq g(m,l)$ follows now by strong induction, so we have proven the theorem.

\end{proof}

\begin{theorem} \textbf{:}
Consider now a set S of $f(n-1,n-1) + g(n,n-2) +1$ points, with $n \geq 6$. Then it contains at least one of the following: \\
1. An n-cup \\
2. An (n-1)-cap \\
3. A convex n-gon 
\end{theorem}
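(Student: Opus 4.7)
The plan is a proof by contradiction. Assume that $S$ contains no $n$-cup, no $(n-1)$-cap, and no convex $n$-gon. I would derive a contradiction by locating an $(n, n-2)$-good point in a suitable subset and activating its defining property.

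First I would partition $S = A \cup B$ via the upper/lower rule from the proof of Theorem~1. Recall the two extension properties: any $k$-cap lying entirely in $A$ extends to a $(k+1)$-cap in $S$ whose rightmost vertex is in $B$, and any $k$-cup lying entirely in $B$ extends to a $(k+1)$-cup in $S$ whose leftmost vertex is in $A$. If $|B| \geq f(n-1, n-1) + 1$, then Theorem~1 forces $B$ to contain either an $(n-1)$-cap (already an $(n-1)$-cap of $S$) or an $(n-1)$-cup lying in $B$ that then extends to an $n$-cup in $S$; either way we have a contradiction. By pigeonhole we therefore have $|A| \geq g(n, n-2) + 1$.

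Next I would verify that $A$ is itself $(n, n-2)$-free: it contains no $n$-cup because $S$ does not, and any $(n-2)$-cap in $A$ would extend to an $(n-1)$-cap in $S$. Theorem~3 applied to $A$ then yields an $(n, n-2)$-good point $s \in A$ that also serves as the left endpoint of an $(n-3)$-cap in $A$ (property $R$).

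The central step is to exhibit a set $B^{*} \subseteq B$ with $|B^{*}| \leq n-2$ such that $A \cup B^{*}$ contains an $(n-1)$-cup whose left endpoint is $s$ and whose right endpoint lies in $B^{*}$. Once such a $B^{*}$ is produced, $s$ being $(n, n-2)$-good (taking the quantified parameter in Definition~3 to equal our $n$) forces one of three configurations in $A \cup B^{*}$: an $(n-2)$-cap whose two rightmost vertices lie in $A$ (which extends to an $(n-1)$-cap in $S$ by the extension property), an $n$-cup whose two leftmost vertices lie in $A$ (an $n$-cup in $S$), or a convex $n$-gon; each contradicts our assumption. The main obstacle is constructing this particular $(n-1)$-cup: I would attempt to build it by combining the $(n-3)$-cap through $s$ in $A$ with suitable points of $B$ lying below and to the right of $s$, exploiting the favourable angular access $s$ has to $B$ as a member of the upper set. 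Keeping the number of $B$-vertices used to at most $n-2$, while producing a genuine cup (increasing slopes) rather than a cap, is where the technical care concentrates.
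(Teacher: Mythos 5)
Your setup (the upper/lower partition, the pigeonhole giving $|A| \geq g(n,n-2)+1$, the observation that $A$ is $(n,n-2)$-free, and the intent to activate the good-point property) matches the paper, but the step you yourself flag as ``where the technical care concentrates'' is precisely the step that is missing, and it cannot be filled in the form you propose. A single $(n,n-2)$-good point $s \in A$ need not be the left endpoint of \emph{any} $(n-1)$-cup in $A \cup B^{*}$ for any small $B^{*} \subseteq B$; nothing in Theorem 3 guarantees this, and trying to manufacture the cup out of the $(n-3)$-cap through $s$ is going the wrong way --- a long cap through $s$ gives you decreasing slopes at $s$, which obstructs rather than helps building a cup with left endpoint $s$. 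Property $R$ is not used in the paper to build the cup at all; it is used at the \emph{right} endpoint of a cup, in a case you do not consider.

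The paper's actual mechanism is quantitative: since $|A| = g(n,n-2)+1+(f(n-1,n-1)-|B|)$, one can peel off not one but $f(n-1,n-1)-|B|+1$ good points (applying Theorem 3 repeatedly to what remains), collect them into a set $C$, and then apply Theorem 1 to $B \cup C$, which has exactly $f(n-1,n-1)+1$ points. It is the Erd\H{o}s--Szekeres cup--cap theorem on $B \cup C$ that \emph{produces} the $(n-1)$-cup (or an $(n-1)$-cap, in which case one is done), and only then does the case analysis begin: left endpoint in $B$ (extend to an $n$-cup), both endpoints in $C$ (use property $R$ at the right endpoint to get an $(n-2)$-cap there and compare slopes), or left endpoint in $C$ and right endpoint in $B$ (invoke the good-point definition). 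Your proposal collapses this to the last case only and leaves the existence of the cup unestablished; without the set $C$ and the count $|B \cup C| = f(n-1,n-1)+1$, the argument does not close.
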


\begin{proof}
We partition S into the "upper" set A and the "lower" set B. If $|B| > f(n-1,n-1)$, then we are clearly done. Otherwise, $|A| = g(n,n-2)+1+(f(n-1,n-1)-|B|)$. Assume that A is $(n,n-2)$-free, otherwise we are done. So now by Theorem 3, A contains at least $f(n-1,n-1)-|B|+1$ points that are (n,n-2)-good, and which are also left endpoints of some $(n-3)$-cap, with vertices in A. Denote this set of points by C. \\
Consider now $B \cup C$. Since $|B \cup C| = f(n-1, n-1) +1$, it contains either an $(n-1)$-cup or an $(n-1)$-cap. In the second case, we are done immediately. In the first we pick an $(n-1)$-cup with vertices in $B\cup C$. If the left endpoint is in B, then we can extend it to an $n$-cup with left endpoint in A. Otherwise let $s \in C$ be the left endpoint of the $(n-1)$-cup. If the right endpoint $t$ is also in C, then we know that there is an $(n-3)$-cap, with vertices in A, whose left endpoint is t. This $(n-3)$-cap can be extended to an $(n-2)$ cap whose right endpoint is in B and its left enpoint remains $t$. So in this case, we have an $(n-1)$-cup whose right endpoint is $t$, and an $(n-2)$-cap whose left endpoint is $t$. So either the first can be extended to an $n$-cup, or the second to an $(n-1)-cap$. So now only one case is remaining. The case that the right endpoint is in B. But in this case, the left endpoint of the $(n-1)$-cup is an $(n,n-2)$-good point in A, while the right endpoint is not in A, so we are instantly done by Definition 2. 

\end{proof}

\section{A Projective Transformation}

Theorem 5 appears in [5],[6]. Below we sketch a proof, which can be found in those papers.

\begin{theorem}[T\'{o}th and Valtr] \textbf{:}
Let $z(n,n-1)$ denote the maximum positive integer, such that there exists a set of cardinality $z(n,n-1)$ that contains no $n$-cup, no $(n-1)$-cap and no convex $n$-gon. Then if the points of a set S form no convex $n$-gon, $|S| \leq z(n,n-1) + 1$.
\end{theorem}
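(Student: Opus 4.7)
The plan is to produce, for every set $S$ in general position with no convex $n$-gon, a single distinguished point $p \in S$ whose removal yields a set that has no $n$-cup, no $(n-1)$-cap, and no convex $n$-gon. By the very definition of $z(n,n-1)$, such a residual set has at most $z(n,n-1)$ points, which gives $|S| \le z(n,n-1)+1$.

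The first observation is that $S$ automatically contains no $n$-cup (an $n$-cup is itself a convex $n$-gon), and no $n$-cap either (same reason). So the only obstacle between $S$ and the definition of $z(n,n-1)$ is the potential presence of $(n-1)$-caps. If $S$ happens to contain none, any $p$ works and we are done. Otherwise, I would apply a projective transformation $\phi$ engineered so that exactly one point of $S$ becomes ``invisible'' while $(n-1)$-caps in the image force convex $n$-gons in the preimage. Concretely, pick $p \in S$ on the boundary of $\mathrm{conv}(S)$, on the lower side, and choose a line $\ell$ passing just above $p$ but below every other point of $S$; let $\phi$ be a projective transformation sending $\ell$ to the line at infinity. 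Then $\phi(p)$ lies at infinity in a direction ``below'' the finite image $\phi(S \setminus \{p\})$, while $\phi$ restricted to $S \setminus \{p\}$ is an affine-like bijection into a bounded region that preserves orientation and convex position.

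With $\phi$ in hand, the verification has two parts. First, $\phi(S \setminus \{p\})$ contains no convex $n$-gon, because convex position is preserved by $\phi$ on any subset whose image is finite, so a convex $n$-gon in the image would pull back to one in $S$. A fortiori, $\phi(S\setminus\{p\})$ contains no $n$-cup. Second, suppose for contradiction that $\phi(S\setminus\{p\})$ contains an $(n-1)$-cap $C$. By construction of $\phi$, the point $\phi(p)$ sits ``below'' $C$ in the new affine chart, so $C\cup\{\phi(p)\}$ is in convex position in the projective sense. Pulling back via $\phi^{-1}$, the preimage of $C\cup\{\phi(p)\} = (C\cup\{\phi(p)\})$ is a set of $n$ points of $S$ in convex position, i.e. a convex $n$-gon in $S$, contradicting our hypothesis. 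Hence $\phi(S\setminus\{p\})$ is $(n,n-1)$-free and also free of convex $n$-gons, so $|S|-1 = |\phi(S\setminus\{p\})| \le z(n,n-1)$.

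The main obstacle is the geometric bookkeeping behind the correspondence ``$(n-1)$-cap in the image $+$ point at infinity $=$ convex $n$-gon in the original''. One must choose the transformation carefully enough that (i) no degeneracies arise (general position is preserved by a generic $\phi$), (ii) the image of $p$ genuinely acts as the missing $n$-th vertex below every potential cap, and (iii) the orientation data used to call something a ``cap'' is consistent between the two charts. Once these geometric details are settled the cardinality bound falls out immediately, and the proof reduces to a single application of the definition of $z(n,n-1)$ to the transformed set.
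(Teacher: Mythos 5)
Your overall strategy is the paper's: both proofs delete one convex-hull point and apply a projective transformation under which that point behaves like the point at vertical infinity below the remaining set, so that any $(n-1)$-cap in the image, together with the deleted point, pulls back to a convex $n$-gon (and cups and convex $n$-gons in the image pull back directly). However, there is a genuine gap at exactly the step you yourself flag as ``the main obstacle'': you assert that because $\phi(p)$ ``sits below'' an $(n-1)$-cap $C$, the set $C\cup\{\phi(p)\}$ is in convex position, but being below the cap --- even below the segment joining its two endpoints --- is not sufficient. For instance, with $C=\{(0,0),(1,1),(2,0)\}$ and $q=(10,-0.1)$, the point $q$ lies below the line through $(0,0)$ and $(2,0)$, yet $(2,0)$ is interior to the triangle with vertices $(0,0)$, $(1,1)$, $q$, so the four points are not in convex position. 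What you actually need is that $\phi(p)$ lies below \emph{every} line spanned by two points of the cap, equivalently that it lies on the same side of every connecting line of $\phi(S\setminus\{p\})$ as the downward vertical direction at infinity.

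The paper secures this with a device your construction omits: before transforming, it chooses an auxiliary point $y$ outside the convex hull so that the segment $xy$ crosses no line spanned by two points of $S$, and the transformation sends $xy$ to a downward vertical ray from $T(x)$. Since no connecting line of $T(S\setminus\{x\})$ meets that ray, $T(x)$ is on the same side of every connecting line as the point at vertical infinity below, which is precisely what makes every $(n-1)$-cap extend to a convex $n$-gon. Your choice of a line $\ell$ ``just above $p$'' can be repaired to yield the same conclusion by a limiting argument (as $\ell$ approaches $p$, the images of the other points stabilize while $\phi(p)$ escapes toward vertical infinity downward, so for $\ell$ close enough $\phi(p)$ ends up below all of the finitely many connecting lines), but as written that verification --- which is the entire content of the theorem --- is missing.
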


\begin{proof}
Let $L$ denote the let of all straight lines that intersect S at 2 points. Pick some point $x \in S$, that lies on the convex hull of S. Let y be a point outside of the convex hull, such that no line in $L$ intersects the line segment $xy$. Next, pick a line $l$ through y that does not intersect the convex hull (viewed as a convex set) os S. Consider a projective transformation $T$ that maps the line $l$ to the line at infinity and $xy$ to a vertical half-line, emanating from $T(x)$ downwards. The transformation clearly preserves convexity. Also, if $T(S/\{x\})$ contains an $(n-1)$-cap, then this $(n-1)$-cap together with x form an $n$-gon. Since convexity is preserved, the inverse images of $n$ such points form an $n$-gon in the original plane. Similarly, if $T(S/\{x\})$ contains an $n$-cup, then we have a convex $n$-gon in S. So if S contains no convex $n$-gon, then $T(S/\{x\})$ is $(n,n-1)$-free and contains no convex $n$-gon, so $|S|-1 = |S/\{x\}| \leq z(n,n-1) $
\end{proof}

\section{Conclusion}

\begin{theorem} \textbf{:}
$f(n) \leq {2n-6 \choose n-3} + g(n,n-2) + 2$
\end{theorem}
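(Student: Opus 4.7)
The plan is to chain together Theorem~5, Theorem~4, and Theorem~1; no new geometric argument is required, only bookkeeping.

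First, since $f(n)$ is by definition the smallest integer for which every set of that many points in general position contains a convex $n$-gon, there exists a set of $f(n)-1$ points that contains no convex $n$-gon. Theorem~5 applied to such a set yields
$$ f(n) - 1 \leq z(n,n-1) + 1. $$

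Next, I would bound $z(n,n-1)$ from above using Theorem~4. A set witnessing $z(n,n-1)$ is, by definition, $(n,n-1)$-free (in particular it has no $n$-cup and no $(n-1)$-cap) and contains no convex $n$-gon. The contrapositive of Theorem~4 then forces any such set to have cardinality at most $f(n-1,n-1) + g(n,n-2)$. Combining this with the previous inequality gives
$$ f(n) \leq f(n-1,n-1) + g(n,n-2) + 2. $$

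Finally, I would substitute the Erd\H{o}s--Szekeres formula from Theorem~1 with parameters $(n-1,n-1)$:
$$ f(n-1,n-1) = {(n-1)+(n-1)-4 \choose (n-1)-2} = {2n-6 \choose n-3}, $$
which yields exactly the stated bound.

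I do not expect a real obstacle; all the substantive work is packaged inside Theorem~4 (with its $(m,l)$-good-point machinery from Theorems~2 and~3) and the projective trick behind Theorem~5. The only things worth double-checking are the off-by-one accounting between the ``smallest $f(n)$'' convention and the ``maximum $z(n,n-1)$'' convention, and that the hypothesis $n \geq 6$ of Theorem~4 is compatible with the regime in which the conclusion is asserted.
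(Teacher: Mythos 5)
Your proposal is correct and follows exactly the paper's own route: Theorem 5 gives $f(n) \leq z(n,n-1)+2$, Theorem 4 gives $z(n,n-1) \leq f(n-1,n-1)+g(n,n-2)$, and Theorem 1 evaluates $f(n-1,n-1)={2n-6 \choose n-3}$. The off-by-one bookkeeping is handled correctly.
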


\begin{proof}
In Theorem 4, we proved that $z(n,n-1) \leq f(n-1,n-1) + g(n,n-2)$. So by Theorem 5, the result follows.
\end{proof}

\begin{theorem} \textbf{:} 
$$ \limsup\limits_{n\rightarrow \infty} \frac{f(n)}{{2n-5 \choose n-2}} \leq \frac{29}{32}$$
\end{theorem}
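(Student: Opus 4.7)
The plan is to start from Theorem 7, which asserts $f(n) \leq \binom{2n-6}{n-3} + g(n,n-2) + 2$, divide through by $\binom{2n-5}{n-2}$, and compute the asymptotic contribution of each term. For the leading binomial, the identity $\binom{2n-6}{n-3}/\binom{2n-5}{n-2} = (n-2)/(2n-5)$ immediately gives the limit $1/2$. For $g(n,n-2)$, I would expand via Definition 3: substituting $m = n$ and $k = n-2$ yields $w_4(n, n-2) = \binom{2n-8}{n-4} + \binom{2n-9}{n-5} + 2\binom{2n-10}{n-6}$ and $w_i(n, n-2) = (i-1)\binom{2n-i-6}{n-i-2}$ for $5 \leq i \leq n-2$, with all other terms vanishing.

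Each such binomial divided by $\binom{2n-5}{n-2}$ simplifies to a ratio of falling factorials, i.e.\ a product of linear factors of the form $(n-c_j)/(2n-d_j)$, each tending to $1/2$. A direct count of factors gives $\binom{2n-8}{n-4}/\binom{2n-5}{n-2} \to 1/8$, $\binom{2n-9}{n-5}/\binom{2n-5}{n-2} \to 1/16$, and $\binom{2n-10}{n-6}/\binom{2n-5}{n-2} \to 1/32$, whence $w_4(n,n-2)/\binom{2n-5}{n-2} \to 1/8 + 1/16 + 2 \cdot (1/32) = 1/4$. Similarly, for each fixed $i \geq 5$, $\binom{2n-i-6}{n-i-2}/\binom{2n-5}{n-2} \to 1/2^{i+1}$, so the $w_i$ summand contributes $(i-1)/2^{i+1}$ in the limit.

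The main obstacle is to interchange $\lim_{n \to \infty}$ with the infinite sum over $i$: although $w_i(n,n-2)$ vanishes for $i > n-2$, the upper cutoff of the nonzero terms grows with $n$, so a uniform summable majorant is required. I would set $r_i(n) := \binom{2n-i-6}{n-i-2}/\binom{2n-5}{n-2}$ and, using the identity $\binom{N-1}{k-1}/\binom{N}{k} = k/N$, compute $r_{i+1}(n)/r_i(n) = (n-i-2)/(2n-i-6) \leq 1/2$ for all $i \geq 2$, so that $r_i(n) \leq r_4(n) \cdot 2^{-(i-4)}$. Since $r_4(n)$ is bounded for $n$ large, $(i-1) r_i(n)$ admits a summable geometric majorant of the form $C(i-1)/2^i$, and dominated convergence legitimates the interchange. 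Evaluating via $\sum_{k \geq 1} k/2^k = 2$ and $\sum_{k=1}^{3} k/2^k = 11/8$ gives $\sum_{i=5}^{\infty}(i-1)/2^{i+1} = \tfrac{1}{4} \sum_{k=4}^{\infty} k/2^k = \tfrac{1}{4} \cdot \tfrac{5}{8} = 5/32$, and summing the three limiting contributions yields $1/2 + 1/4 + 5/32 = 29/32$, as required.
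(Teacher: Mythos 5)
Your proposal is correct and follows essentially the same route as the paper: divide the bound $f(n) \leq \binom{2n-6}{n-3} + g(n,n-2) + 2$ of the preceding theorem by $\binom{2n-5}{n-2}$, compute the termwise limits $1/2$ for the leading binomial, $1/4$ for $w_4$, and $(i-1)/2^{i+1}$ for each $w_i$ with $i\geq 5$, and sum to get $29/32$. The only difference is that you explicitly justify interchanging the limit with the sum over $i$ via the geometric majorant $r_{i+1}(n)/r_i(n)\leq 1/2$ — a point the paper dismisses as "simple to argue" — and your arithmetic ($\sum_{i\geq 5}(i-1)/2^{i+1}=5/32$) is correct, in fact avoiding a small typographical slip in the paper's displayed sum where $2/32$ appears as $1/32$.
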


\begin{proof}
First, we notice that $\lim_{n\to\infty} \frac{w_i(n,n-2)}{{2n-5 \choose n-2}} = \frac{i-1}{2^{i+1}}$ for $i \geq 5$ and \\
$\lim_{n\to\infty} \frac{w_4(n,n-2)}{{2n-5 \choose n-2}} \leq \frac{1}{8} + \frac{1}{16} + \frac{2}{32} = \frac{1}{4} $. \\
Now it is simple to argue that $\lim_{n\to\infty} \frac{g(n)}{{2n-5 \choose n-2}} \leq \frac{1}{8} + \frac{1}{16} + \frac{1}{32} + \sum_{i=5}^{\infty} \frac{i-1}{2^{i+1}}=\frac{13}{32}$.
Combining the last inequality with Theorem 6, we get the desired result. 
\end{proof}

\section{Appendix}

\begin{theorem} \textbf{:}
For $m \geq 4,l \geq 5$, any $(m,l)$-free set S of more than $g(m,l) = f(m,l) - {m+l-6 \choose l-3} + {m+l-8 \choose l-5}$ points contains an $(m,l)$-good point s. This point s also  has the property that it is the left endpoint of an $(l-1)-$cap, which we call property $R$.
\end{theorem}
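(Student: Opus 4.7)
My plan is to deduce this theorem directly from Theorem 3 by establishing the algebraic identity
\[
f(m,l) - \binom{m+l-6}{l-3} + \binom{m+l-8}{l-5} = \sum_{i=4}^{\infty} w_i(m,l)
\qquad (m \geq 4,\ l \geq 5).
\]
Once this is in hand, the assertion of the present theorem is literally the assertion of Theorem 3, and nothing further is required; the entire content of the proof reduces to a combinatorial identity between two expressions in $m$ and $l$.

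My approach to the identity is the standard one for Pascal-type recurrences: show that both sides obey
\[
\phi(m,l) = \phi(m-1,l) + \phi(m,l-1) \qquad (m \geq 5,\ l \geq 6),
\]
and then check agreement on the two boundary lines $m = 4$ (all $l \geq 5$) and $l = 5$ (all $m \geq 4$); a double induction on $m+l$ then fills in the interior. For the closed form the recurrence is immediate upon applying $\binom{a}{b} = \binom{a-1}{b} + \binom{a-1}{b-1}$ separately to each of the three binomial summands. For $\sum_i w_i$ the recurrence is essentially implicit in the proof of Theorem 3, since each $w_i$ with $i \geq 5$ is a single binomial and $w_4$ is a sum of three binomials, all of which obey Pascal.

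The boundary checks are short computations: along $m = 4$ the $w_i$'s collapse to an arithmetic sum equal to $\binom{l}{2} - 2$, and the closed form simplifies to the same value; along $l = 5$ only $w_4$ and $w_5$ contribute, and a single Pascal expansion of $\binom{m+1}{3}$ reconciles the two sides. The one place needing genuine care is verifying that the convention $w_i(m,l) = 0$ for $l < i$ of Definition 4 does not break the Pascal step at the ``activation threshold'' $l = i$, where $w_i(m,i) = i - 1$ first appears; this is the only real obstacle in the plan, and it is handled by writing out the single case $l = i$ explicitly. Once the identity is established, invoking Theorem 3 completes the proof.
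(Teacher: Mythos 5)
Your proposal is correct, and it takes a genuinely different route from the paper. The paper's Appendix proof does not pass through Theorem~3 at all: it re-runs the whole induction on $h(m,l)$ (the maximal size of an $(m,l)$-free set with no good point) directly against the closed-form $g$, using $h(m+1,l+1)\le h(m+1,l)+h(m,l+1)$, the Pascal recurrence for the closed-form $g$, and fresh boundary verifications $h(4,l)\le g(4,l)$ and $h(m,5)\le g(m,5)$ (the latter via $g(m,5)=\binom{m}{3}+m$; the paper has a typo dropping the $+m$). You instead prove the purely algebraic identity $\sum_{i\ge 4} w_i(m,l)=f(m,l)-\binom{m+l-6}{l-3}+\binom{m+l-8}{l-5}$ and then quote Theorem~3 verbatim, which is exactly the ``equivalence of the two $g$'s'' that the paper asserts without proof in its closing remark. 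Both arguments ultimately rest on the same ingredients (Pascal's rule plus checks along $m=4$ and $l=5$), but yours cleanly separates the combinatorial content (already done in Theorem~3) from the bookkeeping, whereas the paper's version re-derives the combinatorial bound and so is self-contained relative to Theorem~2. Your stated boundary values are right: along $m=4$ both sides equal $\binom{l}{2}-2$, and along $l=5$ both equal $\binom{m}{3}+m$; and your flagged ``activation threshold'' is genuinely the only delicate point, resolved as you say since $w_i(m+1,i)=i-1=w_i(m+1,i-1)+w_i(m,i)=0+(i-1)$. The one thing to make explicit when writing this up is that the identity requires the exact value $f(m,l)=\binom{m+l-4}{l-2}$ from Theorem~1 (the classical Erd\H{o}s--Szekeres equality, whose lower-bound half the paper omits but assumes), since $f(m,l)$ appears symbolically in the definition of $g$.
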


\begin{proof}
Let $h(m,n)$ denote the maximum cardinality of an $(m,l)$-free set that contains no $(m,l)$-good points. \\
For $l=4$, we showed in Theorem 2 that $h(m,4)\leq {m \choose 2} -m+2$. \\
Also we have $h(m,l)\leq f(m,l)$, since any set of more than $f(m,l)$ points is not $(m,l)$-free. \\
Now, by use of the identity ${a \choose b} = {a-1 \choose b} + {a-1 \choose b-1}$, we notice that $g(m+1, l+1) = g(m+1,l) + g(m,l+1)$. \\
Moreover, we clearly have $h(m+1,n+1) \leq h(m+1,n) + h(m,n+1)$. \\ 
So if we prove that for $l \geq 5$ we have $h(4,l) \leq g(4,l)$ and that for $m\geq 4$ we have $h(m,5) \leq g(m,5)$, we are done by the above inequalities and strong induction. \\
We first prove $h(4,l) \leq g(4,l)$ by induction on $l$. The base case $l=5$ holds by $h(4,5)\leq h(4,4)+h(3,5) \leq {4 \choose 2} -4+2 + f(3,5)=6-4+2+4=8$ and $g(4,5) = f(4,5) - {4+5-6 \choose 5-3} + {4+5-8 \choose 5-5} = 10-3+1 = 8$. \\
For $l \geq 6$, we have by the inductive hypothesis $h(4,l) \leq h(4,l-1) + h(3,l) \leq g(4,l-1) + f(3,l) = f(4,l-1) - {4+(l-1)-6 \choose (l-1)-3} + {4+(l-1)-8 \choose (l-1)-5} + f(3,l) = (f(4,l-1) + f(3,l)) -  {l-3 \choose l-4} + {l-5 \choose l-6} = f(4,l) -(l-3)+(l-5) = f(4,l) - (l-2) + (l-4) = f(4,l) - {4+l-2 \choose l-3} + {4+l-4 \choose l-5} = g(4,l)$. \\
Next, we will prove that $h(m,5) \leq g(m,5)$, by showing that $h(m,5) \leq f(m,5)-{m-1 \choose 2} + 1 = g(m,5)$. We will again use induction, this time on $m$. For $m=4$, we have $h(4,5) \leq h(4,4) + h(3,5) \leq {4 \choose 2} -4 +2 + 4 = 8 = 10-3+1 = f(4,5) - {4-1 \choose 2} + 1 =g(4,5) $, which proves the base case. \\
For $m\geq 5$, we have by the inductive hypothesis that: \\
$h(m,5) \leq h(m,4) + h(m-1,5) \leq {m \choose 2} -m+2 + g(m-1,5)=$ \\
${m \choose 2}-m+2 + f(m-1,5) - {m-2 \choose 2} + {m-4 \choose 0} = {m \choose 3} + m$. \\
So we are left to prove that $g(m,5) \geq {m \choose 3}$, which is actually an equality, as it follows from simple algebraic manipulations. \\
$h(m,l) \leq g(m,l)$ follows now by strong induction, so we have proven the theorem.
\end{proof}

We can now use this new function $g$, which can be proven to be equivalent to the function $g$ from Theorem 3, to again obtain the same results mentioned in the main paper. In particular, we get the upper bound:
$$f(n) \leq {2n-5 \choose n-2} - {2n-8 \choose n-5} + {2n-10 \choose n-7}+2$$


\begin{thebibliography}{1}

  \bibitem{notes} F. R. K. Chung and R. L. Graham, {\em 'Forced Convex n-Gons in the Plane'}, Discrete and Computational Geometry 19:3, 367-371 (1998),

  \bibitem{fo}  P. Erd\H{o}s and G. Szekeres, {\em 'A combinatorial problem in geometry'}, Compositio Math. 2 (1935), 463{-}470,

  \bibitem{fo}  P. Erd\H{o}s and G. Szekeres, {\em 'On some extremum problems in elementary geometry}, Ann. Univ. Sci. Budapest. E\H{o}tv\H{o}s Sect. Math. 3{-}4 (1960/1961), 53{-}62

  \bibitem{impj}  D. Kleitman and L. Pachter, {\em 'Finding convex sets among points in the plane'}, Discrete and Computational Geometry 19:3, Special Issue (1998), 405{-}410,

  \bibitem{norman}  G. T\'{o}th and P. Valtr, {\em 'Note on the Erd\H{o}s}-{\em Szekeres theorem'}, Discrete and Computional Geometry 19:3, Special Issue (1998), 457{-}459,
  
  \bibitem{norman}  G. T\'{o}th and P. Valtr, {\em 'The Erd\H{o}s}-{\em Szekeres Theorem: Upper Bounds and Related Results'}, Combinatorial and Computional Geometry, Volume 52 (2005),


  \end{thebibliography}
\end{document}